%
%
\documentclass[12 pt]{article}
%

\usepackage{amsfonts}
\usepackage{amsrefs}
\usepackage{mathrsfs,amsmath}
\usepackage{amssymb}
\usepackage{graphicx}
\usepackage{wrapfig}
\usepackage{geometry}
\usepackage{amsthm}
\usepackage{commath}
\usepackage{color}
\newtheorem{theorem}{Theorem}[section]
\newtheorem{corollary}{Corollary}[theorem]
\newtheorem{lemma}[theorem]{Lemma}

\newtheorem*{remark*}{Remark}
\newtheorem*{theorem*}{Theorem}

\begin{document}

\title{Shapes of Centrally Symmetric Octahedra with Prescribed Cone-Deficits}
%

%
\author{Zili Wang}
\date{}
%

%
%

\maketitle              

\section{Introduction}
The surface of a polyhedron is a Euclidean cone metric on the sphere, a metric that is locally Euclidean except at finitely many points, namely its vertices. Each vertex has a neighborhood isometric to a Euclidean cone, which can be obtained by gluing two bounding rays of a sector that make an angle $\theta$. We will call $\theta$ the \textit{cone-angle}, and $2\pi-\theta$ the \textit{cone-deficit} associate to this vertex. The sum of the cone-deficits associated to all vertices is $4\pi$ by the Gauss-Bonnet Theorem.

A polyhedron is convex if the cone-angle associated to each vertex is strictly less than $2\pi$. It is centrally symmetric if it is symmetric about the origin. That is, a point $x$ lies in this polyhedron if and only if its \textit{antipodal point},$-x$, also lies in it. We call the map $x\mapsto -x$ the \textit{antipodal map}. Thus, a centrally symmetric polyhedron have an even number $2n$ of vertices, and the cone-deficits are determined by specifying $n$ positive numbers that add up to $2\pi$. 

In \cite{Thu}, Thurston considered the space of convex polyhedra with $n$ vertices and prescribed cone-deficits. He built local coordinate charts from the space into $\mathbb{C}^{n-2}$ based on a decomposition of the surface of each polyhedron into triangles. Then he showed that the surface area function in these coordinates is a Hermitian form of signature $(1,n-3)$. This gives a natural metric on the space of such polyhedra whose total surface area are equal to $1$. With respect to this metric, the space is locally isometric to the complex hyperbolic space of dimension $n-3$.

In this work, we focus on centrally symmetric octahedra with prescribed cone-deficits and labeled vertices. Let $\delta_1,\delta_2,\delta_3$ be three positive numbers that sum to $2\pi$. We consider the collection $\mathcal{C}(\delta_1,\delta_2,\delta_3)$ of centrally symmetric octahedra with cone-deficits $\delta_1,\delta_2,\delta_3$ and of total surface area $1$, in which two octahedra are equivalent if there is an isometry between them that respects vertex labels. Analogous to Thurston's description, we show that:

\begin{theorem*} There is a natural metric on $\mathcal{C}(\delta_1,\delta_2,\delta_3)$, with respect to which it is isometric to a real hyperbolic ideal tetrahedron with dihedral angles $\frac{\delta_1}{2},\frac{\delta_2}{2}$ and $\frac{\delta_3}{2}$.
\end{theorem*}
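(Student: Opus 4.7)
The plan is to adapt Thurston's complex-hyperbolic construction from \cite{Thu} to cone metrics invariant under the antipodal involution $\tau$, producing a real hyperbolic structure on $\mathcal{C}(\delta_1,\delta_2,\delta_3)$, and then to identify the resulting convex region with an ideal tetrahedron by classifying boundary strata and computing dihedral angles. The dimensions already agree: a centrally symmetric octahedron is determined by $3$ vertex positions in $\mathbb{R}^3$ ($9$ real parameters), modulo the $3$-dimensional $\mathrm{SO}(3)$, the unit-area condition, and the $2$ independent deficit constraints, leaving real dimension $3 = \dim_{\mathbb{R}}\mathbb{H}^3$.

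\textbf{Hyperbolic structure via a real slice.} I would apply Thurston's machinery to the octahedral surface as a $6$-vertex cone metric on $S^2$: triangulate $\tau$-equivariantly by the $8$ faces, cut along one edge, and develop to the plane to obtain Thurston's local complex coordinates, in which the surface area is a Hermitian form of signature $(1,3)$. Since $\tau$ is an orientation-reversing isometry of the cone-metric surface, it acts on this coordinate patch by an anti-holomorphic involution preserving the Hermitian form. Its fixed locus is a totally real $\mathbb{R}^4 \subset \mathbb{C}^4$ on which the Hermitian form restricts to a real quadratic form of signature $(1,3)$; projectivizing and normalizing to unit area produces a copy of $\mathbb{H}^3$, inside which $\mathcal{C}(\delta_1,\delta_2,\delta_3)$ embeds as an open convex region.

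\textbf{Identification as an ideal tetrahedron.} To finish, I would classify the boundary strata of this region in $\overline{\mathbb{H}^3}$. Codimension-$2$ strata arise when a pair of non-antipodal octahedral vertices collides (their antipodes colliding simultaneously by central symmetry): such a stratum is indexed by an unordered deficit pair $\{i,j\} \subset \{1,2,3\}$ together with a sign choice ($v_i \leftrightarrow v_j$ vs.\ $v_i \leftrightarrow -v_j$), giving exactly $6$ strata corresponding to the $6$ edges of the putative tetrahedron. Each edge limits at $2$ ideal cusps where three compatible collisions coalesce; the sign-consistency condition $\epsilon_{12}\epsilon_{23}\epsilon_{13} = +1$ selects exactly $4$ such configurations, matching the $4$ ideal vertices of the tetrahedron. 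The dihedral angle along the edge indexed by $\{i,j\}$ is then computed from the cusp cross-section: the colliding cone points have combined deficit $\delta_i + \delta_j = 2\pi - \delta_k$, so the ambient $\mathbb{CH}^3$ rotational parameter at the cusp is $\delta_k$, which is halved by the $\tau$-descent to yield real hyperbolic dihedral angle $\delta_k/2$, matching the theorem. The main obstacle is precisely this last computation --- tracking the factor of $\tfrac{1}{2}$ through the $\tau$-reduction from the complex to the real hyperbolic metric --- and I would cross-check it against the symmetric case $\delta_1 = \delta_2 = \delta_3 = 2\pi/3$, which should yield the regular ideal tetrahedron with all dihedral angles $\pi/3$.
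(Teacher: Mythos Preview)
Your real-slice strategy is a genuinely different route from the paper, which instead builds explicit coordinates $(a,b,c,d)\in\mathbb{R}^4_{>0}$ from a parallelogram decomposition of the surface, writes down the area as an explicit quadratic form of signature $(1,3)$, and computes the dihedral angles by taking inner products of normal vectors to the coordinate hyperplanes. There is, however, a real gap in your argument. The fixed locus of the anti-holomorphic involution on Thurston's $\mathbb{CH}^3$ consists of all labeled cone metrics admitting an orientation-reversing involution swapping $v_i\leftrightarrow v_{i'}$, but such an involution of $S^2$ can be either the antipodal map (giving a centrally symmetric octahedron) \emph{or} a reflection in a great circle (giving a polyhedron with a mirror plane rather than a center). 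Both types sit in the same real $\mathbb{H}^3$, separated by codimension-one walls where the metric carries both symmetries simultaneously --- precisely the doubled centrally symmetric hexagons (pillowcases). These pillowcase walls, not Thurston's collision strata, are the four faces of the tetrahedron; in the paper's coordinates they are the hyperplanes $a=0$, $b=0$, $c=0$, $d=0$, each exhibited as totally geodesic by being the fixed set of an explicit isometric reflection of the area form. Your argument jumps straight to the six collision geodesics (which are indeed the edges) and the four ideal cusps, but never produces the four faces, and without them you cannot conclude that $\mathcal{C}$ is the ideal tetrahedron rather than some larger portion of the real slice.

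Even granting the tetrahedron, the dihedral-angle step is only heuristic: extracting the real dihedral angle from Thurston's transverse complex cone along a collision stratum requires knowing exactly how the two real faces sit inside that cone, and ``halve the complex rotational parameter'' is not a theorem. The paper sidesteps this entirely with the direct normal-vector computation; your regular-case cross-check is reassuring but not a proof.
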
   

The outline of the proof is as follows:

In Section 2.1, we describe a way to decompose the surface of any centrally symmetric octahedron into twelve parallelograms. The decomposition uses all the vertices together with eight extra points and is invariant under the antipodal map.

In Section 2.2, we study the space of centrally symmetric octahedra with labeled vertices and prescribed cone-deficits. Based on the decomposition in Section 2.1, we build one coordinate chart that identifies this space with the positive orthant in $\mathbb{R}^4$.

In Section 2.3, using these coordinates, we show that the surface area function is a quadratic form of signature $(1,3)$. This determines a metric with respect to which $\mathcal{C}(\delta_1,\delta_2,\delta_3)$ is locally isometric to the real hyperbolic space $\mathbb{H}^3$. We also show that the boundary of $\mathcal{C}(\delta_1,\delta_2,\delta_3)$, which consists of degenerate octahedra, has four geodesic hyperplanes, each three of which meet at an ideal point.

In Section 2.4, we compute the dihedral angles of $\mathcal{C}(\delta_1,\delta_2,\delta_3)$ by computing the angles between the normal vectors to its bounding hyperplanes.

\subsection*{Acknowledgement}
I would like to thank my adviser Richard Schwartz, from whom I learned this topic and got many helpful feedbacks on the ideas in and the structure of this article.  

\section{Proof of the Theorem}

\subsection{Decomposition of the Surface into Parallelograms}

Let $\Sigma$ be the surface of a centrally symmetric octahedron with cone-deficits $\delta_1,\delta_2$ and $\delta_3$ and corresponding cone-angles $\theta_1,\theta_2$ and $\theta_3$. The faces of the octahedron form a division of $\Sigma$ into eight triangles. We will find a decomposition of $\Sigma$ into twelve parallelograms that uses all the vertices of $\Sigma$ together with eight extra points, one in the interior of each triangle. The decomposition is also invariant under the antipodal map. In the following figure we draw two schematic pictures for this decomposition, in each of which the reader can see four parallelograms in full and another four in half. In the left picture, we color the edges of the parallelograms in the decomposition such that those with the same color have equal length. In the right one, we color the vertices of $\Sigma$ and the parallelograms such that a vertex and a parallelogram get the same color if an angle of the parallelogram is half of the cone-deficit(or cone-angle) associated to that vertex.

\begin{figure}[h]
\centering
\includegraphics[width=0.655\textwidth]{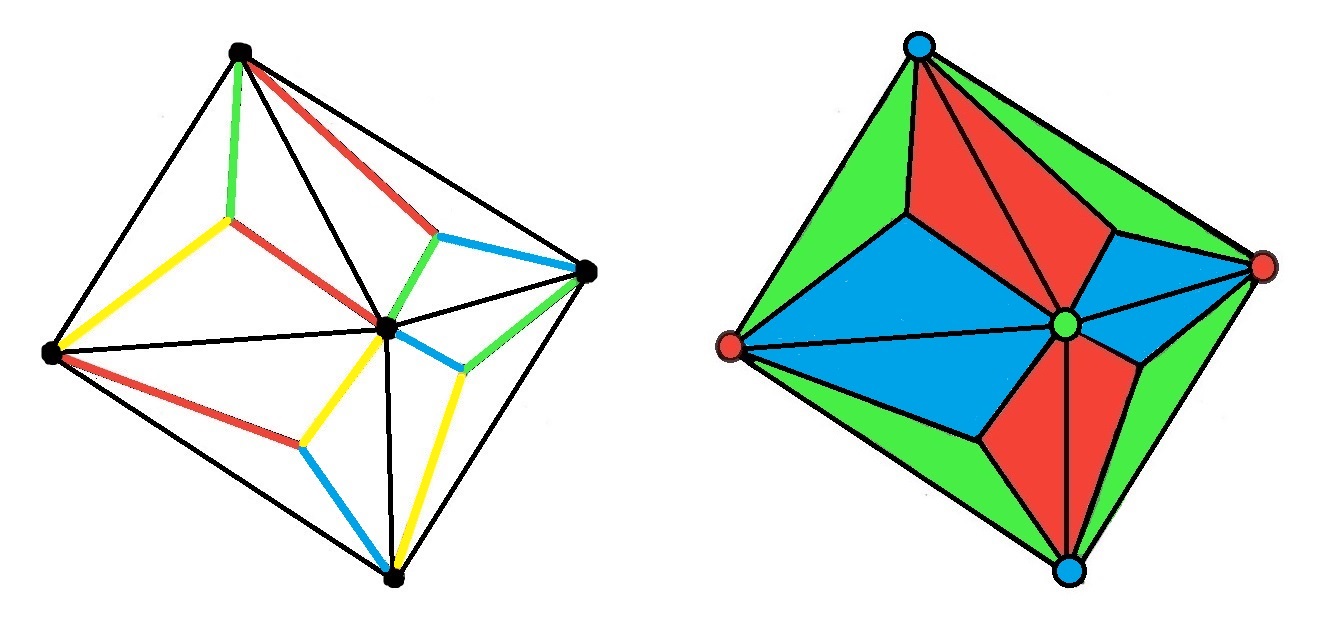}
\label{fig:1}
\end{figure} 

We need to introduce some notations before explaining how the decomposition works. Let $v_1,v_2,v_3,v_{1'},v_{2'},v_{3'}$ be the vertices of $\Sigma$, such that $v_{i'}$ is the antipodal point of $v_i$, and the cone-deficit associate to them is $\delta_i$. Let $T_1, T_2, T_3$ and $T_4$ be the four triangles incident to $v_1$, and let $T_{1'}, T_{2'}, T_{3'}, T_{4'}$ be their images under the antipodal map. Figure 1 shows two side views of $\Sigma$ when $v_1$ and $v_2$ are facing the reader. 

We denote by $\omega_{ij}$ the angle at $v_j$ in $T_i$. Similarly, we also have $\omega_{ij'}$, $\omega_{i'j}$ and $\omega_{i'j'}$. Some of these angles are marked in Figure 1. Note that $\omega_{ij}=\omega_{i'j'}$.
\begin{figure}[h]
\centering
\includegraphics[width=0.7\textwidth]{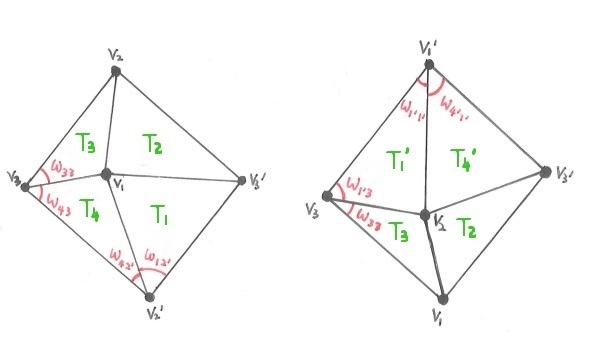}
\caption{Two side views of $\Sigma$ when $v_1$ and $v_2$ are facing the reader. }
\label{fig:1}
\end{figure} 

Now we will find two of the eight extra points in the parallelogram-decomposition. Consider the following two angles $$\alpha=\frac{\omega_{12'}+\omega_{42'}+\omega_{43}+\omega_{33}-\pi}{2}$$
$$\beta=\frac{\omega_{32}+\omega_{22}+\omega_{23'}+\omega_{13'}-\pi}{2}$$

\begin{lemma}
There is a unique point $O_3$ in the interior of $T_3$ such that $\angle O_3v_3v_2=\alpha$ and $\angle O_3v_2v_3=\beta$. In addition, there is a unique point $O_1$ in the interior of $T_1$ such that $\angle O_1v_{2'}v_{3'}=\alpha$ and $\angle O_1v_{3'}v_{2'}=\beta$.
\end{lemma}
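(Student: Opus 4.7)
The plan is to reduce the existence and uniqueness of $O_3$ to three elementary planar conditions and verify them in turn. In the Euclidean triangle $T_3$ with vertices $v_1, v_2, v_3$ and angles $\omega_{31}, \omega_{32}, \omega_{33}$, a point in its interior with $\angle O_3 v_3 v_2 = \alpha$ and $\angle O_3 v_2 v_3 = \beta$ is automatically unique if it exists, namely as the intersection of the ray from $v_3$ at angle $\alpha$ off $v_3 v_2$ with the ray from $v_2$ at angle $\beta$ off $v_2 v_3$. It lies in the interior of $T_3$ iff $0 < \alpha < \omega_{33}$ and $0 < \beta < \omega_{32}$; these bounds imply $\alpha + \beta < \omega_{32} + \omega_{33} = \pi - \omega_{31} < \pi$, so the two interior rays necessarily meet inside.

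The first step is the computation $\alpha + \beta = \delta_1/2$. Summing the defining expressions,
\[
2(\alpha + \beta) = \big[(\omega_{12'}+\omega_{42'}) + (\omega_{43}+\omega_{33}) + (\omega_{32}+\omega_{22}) + (\omega_{23'}+\omega_{13'})\big] - 2\pi,
\]
and the four bracketed sums are precisely the interior angles of the geodesic equator-quadrilateral $v_2 v_3 v_{2'} v_{3'}$ measured on the side containing $v_1$ (each is the sum of the two angles at a given equator-vertex contributed by the two northern triangles incident there). Applying Gauss--Bonnet to the northern cap $T_1 \cup T_2 \cup T_3 \cup T_4$---a topological disc containing the single cone-point $v_1$ of deficit $\delta_1$---these four interior angles sum to $2\pi + \delta_1$. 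Hence $\alpha + \beta = \delta_1/2 < \pi$.

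The main obstacle is verifying the individual bounds $0 < \alpha < \omega_{33}$ and $0 < \beta < \omega_{32}$. Observe that $2\alpha + \pi$ and $2\beta + \pi$ are precisely the sums of northern interior angles at the adjacent pairs $(v_3, v_{2'})$ and $(v_2, v_{3'})$ of the equator quadrilateral. My approach is to develop the northern cap into the plane by cutting along an edge at $v_1$ (say $v_1 v_2$), producing a planar hexagon with $v_1$ on the boundary carrying interior angle $\theta_1 < 2\pi$. The required inequalities then become planar-geometric statements about this hexagon; I expect them to follow from its embeddedness (a consequence of $\theta_1 < 2\pi$) together with the matching of edge-lengths between adjacent triangles given by the law of sines. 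Executing this carefully is the most delicate step.

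Finally, the existence and uniqueness of $O_1$ in $T_1 = v_1 v_{2'} v_{3'}$ follow by the same template, with the individual bounds now $0 < \alpha < \omega_{12'}$ and $0 < \beta < \omega_{13'}$. The sum $\alpha + \beta = \delta_1/2$ is unchanged, and the positivity and upper-bound inequalities are handled by the parallel planar-development argument, now cutting the northern cap along $v_1 v_{2'}$. Note that $O_1$ is not the antipode of $O_3$: both lie in the northern cap, and their antipodal images produce two of the remaining six extra points in the decomposition.
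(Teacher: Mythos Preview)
Your reduction to the planar conditions $0<\alpha<\omega_{33}$ and $0<\beta<\omega_{32}$ is correct, and you have located the crux precisely. But the proposal stops being a proof exactly there: you suggest developing the northern cap and invoking embeddedness of the resulting hexagon, yet you neither establish that embeddedness (it is \emph{not} a trivial consequence of $\theta_1<2\pi$; that bound only prevents overlap near $v_1$, not self-crossing of the outer boundary) nor explain how the four specific inequalities would be read off from it. Your Gauss--Bonnet computation $\alpha+\beta=\delta_1/2$ is correct and appears later in the paper, but as you yourself note it does not yield the individual bounds, so it contributes nothing to this lemma.

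The paper's argument bypasses any intrinsic development and exploits the three-dimensional central symmetry directly. Since $v_{2'}=-v_2$ and $v_{3'}=-v_3$, the equatorial vertices $v_2,v_3,v_{2'},v_{3'}$ lie in a plane and form a parallelogram; at each of $v_3$ and $v_{2'}$ the face-angle path through $v_1$ strictly exceeds the planar parallelogram angle (triangle inequality for angles between rays in space, strict because $v_1$ is off that plane), and since consecutive parallelogram angles sum to $\pi$ this gives $(\omega_{33}+\omega_{43})+(\omega_{12'}+\omega_{42'})>\pi$, i.e.\ $\alpha>0$. A second coplanar parallelogram on $v_1,v_3,v_{1'},v_{3'}$ yields a companion positive quantity, and one checks (using $\omega_{i'j'}=\omega_{ij}$ and the angle-sums of $T_1$ and $T_4$) that this quantity and $2\alpha$ add up to exactly $2\omega_{33}$; hence $\alpha<\omega_{33}$. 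The bounds on $\beta$, and then the case of $O_1$, follow by the relabeling symmetry $v_2\leftrightarrow v_{2'}$, $v_3\leftrightarrow v_{3'}$, under which $\alpha\leftrightarrow\beta$. This route is short, uses nothing beyond parallelograms and the angle-sum of a triangle, and needs no embeddedness argument.
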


\begin{proof}
We prove the statement for $O_3$ and that for $O_1$ will follow from symmetry.

Note that $v_2,v_3,v_{2'}$ and $v_{3'}$ are coplanar. Hence they are the vertices of a planar quadrilateral, which must be a parallelogram by symmetry. Thus, we have $$2\alpha=\omega_{12'}+\omega_{42'}+\omega_{43}+\omega_{33}-\pi>0$$

Similarly, $v_1,v_3,v_{1'}$ and $v_{3'}$ are vertices of a parallelogram, so $$\omega_{33}+\omega_{1'3}+\omega_{1'1'}+\omega_{4'1'}-\pi>0$$

and their sum 
\begin{equation*}
\begin{split}
&\omega_{12'}+\omega_{42'}+\omega_{43}+\omega_{33}-\pi+\omega_{33}+\omega_{1'3}+\omega_{1'1'}+\omega_{4'1'}-\pi\\
=&2\omega_{33}+ \omega_{12'}+\omega_{42'}+\omega_{43}+\omega_{13'}+\omega_{11}+\omega_{41}-2\pi\\
=&2\omega_{33}+(\omega_{42'}+\omega_{43}+\omega_{41})+(\omega_{12'}+\omega_{13'}+\omega_{11})-2\pi\\
=&2\omega_{33}+2\pi-2\pi\\
=&2\omega_{33}
\end{split}
\end{equation*}
in which the third equality holds because the sums in the brackets are the angle-sums of $T_4$ and $T_1$.

The above equation and two inequalities imply $$0<\alpha<\omega_{33}$$

Similarly, from $$2\beta=\omega_{32}+\omega_{22}+\omega_{23'}+\omega_{13'}-\pi>0$$
$$\omega_{32}+\omega_{1'2}+\omega_{1'1'}+\omega_{2'1'}-\pi>0$$

and their sum equals $2\omega_{32}$, we conclude that $$0<\beta<\omega_{32}$$

Given the ranges of $\alpha$ and $\beta$, there must be a unique point $O_3$ in the interior of $T_3$ such that $\angle O_3v_3v_2=\alpha$ and $\angle O_3v_2v_3=\beta$, as drawn in the following figure. 
\end{proof}

\begin{figure}[h]
\centering
\includegraphics[width=0.78\textwidth]{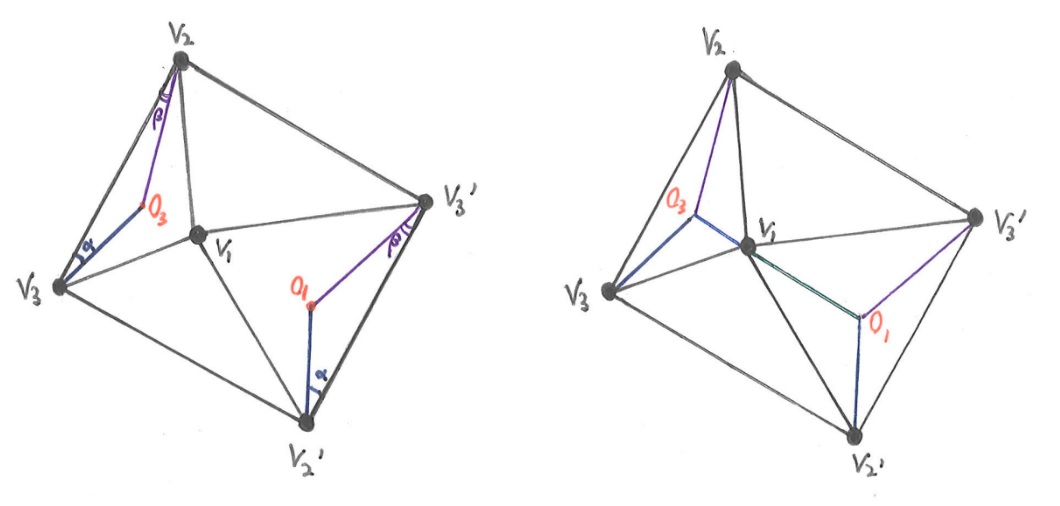}
\end{figure} 

Denote by $O_{1'}$ and $O_{3'}$ the antipodal points of $O_{1}$ and $O_{3}$. Let $O_{1},O_{1'},O_3$ and $O_{3'}$ be four of the eight extra points in the parallelogram-decomposition.

We join $O_1v_{1}$, $O_1v_{2'}$, $O_1v_{3'}$, $O_3v_{3}$, $O_3v_{2}$ and $O_3v_{1}$ by distance-minimizing geodesics, each of which is a line segment inside some triangle. Now we will find a subset of $\Sigma$ that is decomposable into parallelograms.

Let $A$ be the geodesic pentagon with vertices $v_3$, $O_3$, $v_1$, $O_1$ and $v_{2'}$, and $B$ be the geodesic pentagon with vertices $v_{3'}$, $O_1$, $v_1$, $O_3$ and $v_{2}$. Consider the union of $A$ and the image of $B$ under the antipodal map. This is a geodesic octagon in $\Sigma$, with vertices $O_1,v_1,O_3,v_3,O_{1'},v_{1'},O_{3'}$ and $v_{2'}$. We identify it with a planar simple octagon, as sketched in Figure 2. We will show how to decompose this planar octagon into parallelograms. We begin with two observations:

\textbf{Observation 1}: Since $\triangle v_{3'}O_1v_{2'}$ is congruent to $\triangle v_2O_3v_3$ (by ASA congruence condition), the lengths of the Euclidean segments $O_1v_{2'}$ and $O_3v_3$ are equal. So do the lengths of $O_1v_{3'}$ and $O_3v_2$ by antipodal symmetry. 

\textbf{Observation 2}: From the definition of the angle $\alpha$, we know $O_1v_{2'}$ and $O_3v_3$ are parallel. Similarly, from the definition of $\beta$ and antipodal symmetry, we know $O_1v_{3'}$ and $O_3v_2$ are parallel.

\begin{figure}[h]
\centering
\includegraphics[width=0.4\textwidth]{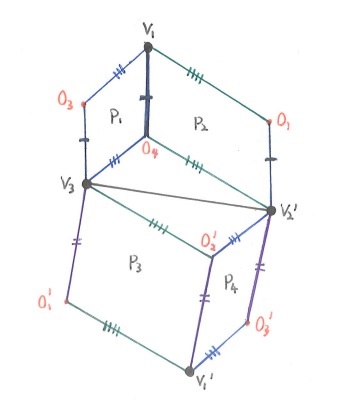}
\caption{This planar octagon is isometric to a subset of $\Sigma$. The segments in this picture with the same marking are parallel and have equal length.}
\label{fig:2}
\end{figure} 

\begin{lemma}
The octagon above has a decomposition into five parallelograms using its vertices and two extra points.
\end{lemma}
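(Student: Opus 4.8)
The plan is to reduce the statement to the combinatorics of the four edge-directions of the octagon. Write $e_1,e_2,e_3,e_4$ for the edge vectors of the sides $O_1v_1$, $v_1O_3$, $O_3v_3$ and $v_3O_{1'}$. The crucial first step is to show that the remaining four sides are $O_{1'}v_{1'}=-e_1$, $v_{1'}O_{3'}=-e_2$, $O_{3'}v_{2'}=-e_4$ and $v_{2'}O_1=-e_3$, so that read cyclically the eight edge vectors are $e_1,e_2,e_3,e_4,-e_1,-e_2,-e_4,-e_3$. Observations 1 and 2 give precisely two of these relations, namely $O_3v_3=e_3=-(v_{2'}O_1)$ and $v_3O_{1'}=e_4=-(O_{3'}v_{2'})$ (the pairs of sides $3,8$ and $4,7$), together with the equalities of length. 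The other two relations, for the pairs of sides $1,5$ and $2,6$, I would obtain from the fact that the pentagons $A$ and $B$ share the two edges $O_1v_1$ and $v_1O_3$ meeting at $v_1$: the antipodal map sends these edges of $B$ to $O_{1'}v_{1'}$ and $v_{1'}O_{3'}$, which in the developed octagon appear as $-e_1$ and $-e_2$.

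Granting this, I would put $O_1$ at the origin and read the eight vertices as partial sums of the $e_i$; in order around the octagon they are $0,\ e_1,\ e_1+e_2,\ e_1+e_2+e_3,\ e_1+e_2+e_3+e_4,\ e_2+e_3+e_4,\ e_3+e_4,\ e_3$. Thus they are eight of the sixteen points $\sum_i \varepsilon_i e_i$ with $\varepsilon_i\in\{0,1\}$, and the boundary traverses the direction sequence $e_1,e_2,e_3,e_4,-e_1,-e_2,-e_4,-e_3$. This is exactly the boundary of the convex zonogon on the generators $e_1,e_2,e_3,e_4$ \emph{except} that the last two steps $-e_4,-e_3$ are transposed; that single transposition is what makes the octagon non-convex, with one reflex vertex (at $v_{2'}$ in this labelling).

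To produce the decomposition I would introduce the two interior points $P=v_1+v_{2'}-O_1$ and $Q=O_{3'}+v_1-O_1$ and take the five quadrilaterals $O_1v_1Pv_{2'}$, $v_1O_3v_3P$, $Pv_3O_{1'}Q$, $v_{2'}PQO_{3'}$ and $O_{3'}QO_{1'}v_{1'}$, whose side directions are $\{e_1,e_3\}$, $\{e_2,e_3\}$, $\{e_2,e_4\}$, $\{e_1,e_4\}$ and $\{e_1,e_2\}$ — one parallelogram for each pair $\{i,j\}\neq\{3,4\}$. Every vertex of the octagon occurs as a corner, and only $P$ and $Q$ are interior, matching ``its vertices and two extra points''. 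Since in each quadrilateral the two pairs of opposite sides are among the $\pm e_i$, the equalities from Step 1 make each piece a genuine parallelogram.

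The one point that still needs care — and the part I expect to be the main obstacle — is that, because the octagon is \emph{not} convex, I cannot simply quote a convex zonogon tiling to conclude that these five parallelograms meet only along edges and exhaust the region. I would close this gap by checking directly that adjacent pieces share a complete edge and that their total area $\sum_{\{i,j\}\neq\{3,4\}}|e_i\times e_j|$ equals the area of the octagon (a short shoelace computation); together these certify that the five parallelograms form an honest tiling. The other delicate point is Step 1 itself: extracting all four parallel-and-equal relations, in particular the two pairs not handled by the Observations, is where most of the bookkeeping lies.
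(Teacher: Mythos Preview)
Your overall plan is sound, and once Step~1 is in hand your explicit decomposition works (indeed it is a slightly different but equally valid tiling from the paper's: your second interior point is $Q=e_1+e_3+e_4$, whereas the paper uses $O_{2'}=e_2+e_3$). The real issue is the justification you give for Step~1.

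The fact that $A$ and $B$ share the edges $O_1v_1,\,v_1O_3$ and that the antipodal map sends them to $O_{1'}v_{1'},\,v_{1'}O_{3'}$ only gives you the \emph{lengths} $|f_1|=|e_1|$, $|f_2|=|e_2|$; it says nothing about directions in the developed plane. The antipodal involution on $\Sigma$ does not act on the planar region $A\cup\sigma(B)$ (it carries $A$ to $\sigma(A)$, which is not part of the octagon), so there is no a~priori reason for it to become the half-turn $x\mapsto -x$ after development. Your claim $f_1=-e_1,\,f_2=-e_2$ is exactly the statement that the developed octagon is centrally symmetric, and that is the nontrivial content of the lemma, not an input you can read off from the antipodal symmetry of $\Sigma$.

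What actually pins this down is what the paper does: from the Observations one gets $O_3O_{1'}=O_1O_{3'}=e_3+e_4$, so $O_1O_3O_{1'}O_{3'}$ is a parallelogram and in particular $|O_1O_3|=|O_{1'}O_{3'}|$; together with $|O_1v_1|=|O_{1'}v_{1'}|$ and $|v_1O_3|=|v_{1'}O_{3'}|$ this gives an SSS congruence forcing the interior angles at $v_1$ and $v_{1'}$ to be equal, hence both $\theta_1/2$. Equivalently (and this is how the paper finishes), the leftover central quadrilateral has both pairs of opposite sides equal in length, and a \emph{simple} quadrilateral with that property is a parallelogram --- which is precisely the statement $f_1=-e_1,\,f_2=-e_2$. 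So to close your Step~1 you need, beyond $e_1+e_2+f_1+f_2=0$ and $|f_i|=|e_i|$, either the equal-angle argument at $v_1,v_{1'}$ or the simplicity of the central piece to exclude the reflected solution for $(f_1,f_2)$. The ``shared edges $+$ antipodal map'' heuristic by itself does not do this.
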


\begin{proof}
Let $\angle O_1v_1O_3$ and $\angle O_{1'}v_{1'}O_{3'}$ be the interior angles of the octagon at $v_1$ and $v_{1'}$. We first verify that they are indeed strictly less than $\pi$ as Figure 2 suggests. From Observation 1 and 2, we see that the Euclidean segments $O_1v_3$ and $O_{1'}O_{3'}$ have equal length. So the Euclidean triangle with vertices $O_1,v_1,O_3$ is congruent to the Euclidean triangle with vertices $O_{1'},v_{1'},O_{3'}$(by SSS congruence condition). Let $\psi$ be the magnitude of their angles at $v_1$ and $v_{1'}$. Then $\angle O_1v_1O_3$ must be $\psi$ or $2\pi-\psi$, and so is $\angle O_{1'}v_{1'}O_{3'}$. But given their sum is $\theta_1$ by construction, they must both be equal to $\psi$, and $\psi=\frac{\theta_1}{2}$.

Now since $O_1v_{2'}$ and $O_3v_3$ are parallel and have equal length by Observation 1 and 2, there is a unique point $O_4$ such that the quadrilateral with vertices $O_3,v_1,O_4$ and $v_3$ and the one with vertices $O_1,v_1,O_4$ and $v_{2'}$ are both parallelograms. They are marked as $P_1$ and $P_2$ in Figure 2. We see that $O_4$ is in the interior of $T_4$ since the three angles meeting at $O_4$ are all strictly less than $\pi$.

Similarly, there is a unique point $O_{2'}$ such that both the quadrilateral with vertices $O_{1'},v_{1'},O_{2'},v_{3}$ and the one with vertices $O_{3'},v_{1'},O_{2'},v_{2'}$ are parallelograms. They are marked as $P_3$ and $P_4$. Note that $O_{2'}$ is in the interior of $T_{2'}$.

We are left with the central quadrilateral with vertices $O_{4},v_{2'},O_{2'}$ and $v_{3}$. Since the length of its opposite sides are equal, it must be a parallelogram, denoted by $P_5$.  \end{proof}

Let $O_{4'}$ and $O_{2}$ be the antipodes of $O_{4}$ and $O_{2'}$. Then the complete list of the extra points in the decomposition is $\{O_1,O_{2},O_{3},O_{4},O_{1'},O_{2'},O_{3'},O_{4'}\}$.

To see that we indeed get a parallelogram-decomposition of $\Sigma$, notice that the geodesic quadrilateral on $\Sigma$ with vertices $O_{1'}, v_2, O_3$ and $v_3$ is also a parallelogram by Observation 1, denoted by $P_6$. The twelve parallelograms in the decomposition are hence given by $P_1,P_2,P_3,P_4,P_5,P_6$, together with their images under the antipodal map. 

\subsection{A Coordinate Chart for the Space of Centrally Symmetric Octahedra}

Let $a,b,c,d$ be the lengths of the segments $O_1v_{2'},O_1v_{3'},O_1v_{1}$ and $O_3v_1$. In this section, we show that the space of centrally symmetric octahedra with labeled vertices and prescribed cone-deficits can be identified with the positive orthant in $\mathbb{R}^4$ under the coordinate chart that maps $\Sigma$ to $(a,b,c,d)$.
 
\begin{lemma} 
\label{lem 2}
The side lengths of each parallelogram in the decomposition of $\Sigma$ are from the list $\{a,b,c,d\}$, and one of the angles between two sides is from the list $\{\frac{\delta_1}{2},\frac{\delta_2}{2},\frac{\delta_3}{2}\}$.
\end{lemma}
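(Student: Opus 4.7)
The plan is to handle the two assertions separately. The side-length claim is essentially bookkeeping: by Observation~1 and antipodal symmetry, $|O_1 v_{2'}| = |O_3 v_3| = |O_{1'} v_2| = |O_{3'} v_{3'}| = a$ and the analogous chain of equalities holds for $b$, while $c = |O_1 v_1| = |O_{1'} v_{1'}|$ and $d = |O_3 v_1| = |O_{3'} v_{1'}|$ by definition. I would then enumerate the two sides of each of $P_1, P_2, P_3, P_4$ emanating from its $v_1$- or $v_{1'}$-vertex, note that the remaining two sides are determined by the parallelogram property, deduce the sides of $P_5$ from $|O_4 v_3| = d$ and $|O_4 v_{2'}| = c$ (inherited as opposite sides from $P_1$ and $P_2$), and compute $P_6$'s four sides directly as $a, b, a, b$. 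Antipodal parallelograms inherit the same side lengths.

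For the angles, the central identity is $\alpha + \beta = \delta_1/2$. Summing the defining expressions for $2\alpha$ and $2\beta$, regrouping the eight $\omega$-terms via the antipodal relation $\omega_{ij'} = \omega_{i'j}$ into the full cone-angle sums at $v_2$ and $v_3$, and using $\theta_1 + \theta_2 + \theta_3 = 4\pi$ yields $2(\alpha + \beta) = \theta_2 + \theta_3 - 2\pi = 2\pi - \theta_1 = \delta_1$. From this, the interior angle of $P_6$ at $v_2$ is obtained by crossing the edge $v_2 v_3$ from $O_3 \in T_3$ (where $\angle O_3 v_2 v_3 = \beta$) to $O_{1'} \in T_{1'}$ (where $\angle v_3 v_2 O_{1'} = \alpha$ by antipodal symmetry of $O_1$'s defining angles), giving $\angle O_3 v_2 O_{1'} = \alpha + \beta = \delta_1/2$. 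For $P_5$, I would use that $O_4$ lies in the interior of the face $T_4$ and is therefore regular in $\Sigma$, so the three parallelograms $P_1, P_2, P_5$ meeting at $O_4$ have interior angles summing to $2\pi$. Writing $\phi_1, \phi_2$ for the angles of $P_1, P_2$ at $v_1$, their supplementary angles at $O_4$ are $\pi - \phi_1$ and $\pi - \phi_2$; and the observation in Lemma~2 that $v_1 O_4$ splits $\angle O_1 v_1 O_3 = \theta_1/2$ into exactly these two angles gives $\phi_1 + \phi_2 = \theta_1/2$, whence the $P_5$-angle at $O_4$ equals $\theta_1/2 = \pi - \delta_1/2$, and its adjacent angle at $v_{2'}$ or $v_3$ is $\delta_1/2$.

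It remains to determine the angles of $P_1, P_2, P_3, P_4$; call these $\phi_1, \phi_2, \phi_3, \phi_4$ (well-defined at either cone-vertex of the respective parallelogram, since opposite-vertex angles agree). The cone-angle equation at $v_3$, with contributions $\phi_1, \phi_3, \delta_1/2, \delta_1/2$ coming from $P_1, P_3, P_5, P_6$, gives $\phi_1 + \phi_3 = \theta_3 - \delta_1 = \delta_2$. Each of the four interior points $O_1, O_2, O_3, O_4$ is met by exactly three parallelograms, one of which is always $P_5$, $P_6$, or an antipode of one of these (contributing $\pi - \delta_1/2$ at that interior point); the remaining two contributions produce the four equations $\phi_1 + \phi_2 = \phi_1 + \phi_4 = \phi_2 + \phi_3 = \phi_3 + \phi_4 = \pi - \delta_1/2$. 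These force $\phi_1 = \phi_3$ and $\phi_2 = \phi_4$, and combined with $\phi_1 + \phi_3 = \delta_2$ and $\delta_1 + \delta_2 + \delta_3 = 2\pi$ give $\phi_1 = \phi_3 = \delta_2/2$ and $\phi_2 = \phi_4 = \delta_3/2$, all in the required list. The main obstacle will be this last step: correctly tabulating, at each interior $O_j$, which three parallelograms meet there and which one contributes the $\pi - \delta_1/2$ factor. Once the tabulation is pinned down, the resulting system is small and collapses immediately.
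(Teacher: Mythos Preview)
Your proposal is correct. The side-length bookkeeping is essentially identical to the paper's (the paper just points at Figure~2 and Observation~1, while you spell out the chains of equalities). For the identity $\alpha+\beta=\delta_1/2$, you and the paper reach the same number by two different regroupings of the same eight $\omega$-terms: the paper views them as ``all angles of $T_1,\dots,T_4$ except those at $v_1$'' to get $4\pi-\theta_1$, while you use $\omega_{ij'}=\omega_{i'j}$ to assemble them into the full cone-angles $\theta_2+\theta_3$; both are valid.

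Where your approach genuinely diverges is in handling $P_1,\dots,P_5$. The paper simply says the remaining angles ``can be computed similarly,'' i.e.\ repeat the direct $\omega$-calculation for each parallelogram. You instead compute only $P_6$'s angle directly and then extract the rest from the linear constraints forced by the decomposition: the angle-sum $2\pi$ at each regular point $O_j$, and the cone-angle $\theta_3$ at $v_3$. This is a cleaner and more structural argument---once the incidence pattern is tabulated (three parallelograms at each $O_j$, one always a copy of $P_5$ or $P_6$ contributing $\pi-\delta_1/2$), the system $\phi_1+\phi_2=\phi_1+\phi_4=\phi_2+\phi_3=\phi_3+\phi_4=\pi-\delta_1/2$ together with $\phi_1+\phi_3=\theta_3-\delta_1=\delta_2$ immediately collapses. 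The paper's approach has the virtue of being self-contained at each step; yours makes clear that the angle list $\{\delta_i/2\}$ is forced by the combinatorics once a single angle is known. Your stated ``main obstacle'' (the tabulation at each $O_j$) is real but routine: at $O_4$ one meets $P_1,P_2,P_5$; at $O_3$ one meets $P_1,P_6$, and the antipode of $P_4$; at $O_1$ one meets $P_2$, the antipode of $P_3$, and the antipode of $P_6$; at $O_2$ one meets the antipodes of $P_3,P_4,P_5$.
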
 

\begin{proof}
The side lengths of $P_6$ are $a$ and $b$ by construction. From the parallelogram-decomposition of the octagon in Figure 2, we can find the side lengths of parallelograms from $P_1$ to $P_5$.

For the second statement, we compute $\angle O_{1'}v_2O_3$ in $P_6$. The angles in the remaining parallelograms can be computed similarly.
\begin{equation*}
\begin{split}
\angle O_{1'}v_2O_3&=\alpha+\beta\\ 
&=\frac{\omega_{12'}+\omega_{42'}+\omega_{43}+\omega_{33}-\pi}{2}+\frac{\omega_{32}+\omega_{22}+\omega_{23'}+\omega_{13'}-\pi}{2} \\
&=\frac{4\pi-\theta_1}{2}-\pi\\
&=\frac{\delta_1}{2}
\end{split}
\end{equation*}
in which the third equality is because $\omega_{12'}+\omega_{13'}+\omega_{42'}+\omega_{43}+\omega_{33}+\omega_{32}+\omega_{22}+\omega_{23'}$ sums up all the angles in $T_1,T_4,T_3$ and $T_2$ except for their vertex-angles at $v_1$.

By going through similar calculation with other parallelograms, we are able to verify the properties of this parallelogram-decomposition claimed in the pictures at the beginning of Section 2.1.
\end{proof}

\begin{corollary}
\label{cor 1}
The area of $\Sigma$ is twice of $$(ab+cd)\sin\frac{\delta_1}{2}+(ac+bd)\sin\frac{\delta_2}{2}+(ad+bc)\sin\frac{\delta_3}{2}$$
\end{corollary}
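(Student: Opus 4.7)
The plan is to compute the area of $\Sigma$ by summing the areas of the twelve parallelograms in the decomposition constructed in Section 2.1. Since the antipodal map is an isometry of $\Sigma$ and sends each of $P_1,\ldots,P_6$ to one of the remaining six parallelograms, the total area equals $2\sum_{i=1}^{6}\mathrm{area}(P_i)$. This is the reason for the factor of two in the statement.

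Next I would use Lemma~\ref{lem 2} to read off, for each $P_i$, the two adjacent side lengths from $\{a,b,c,d\}$ and the interior angle from $\{\delta_1/2,\delta_2/2,\delta_3/2\}$, so that $\mathrm{area}(P_i)$ is the product of the two sides times the sine of the angle. The parallelogram $P_6$ is handled directly in the proof of Lemma~\ref{lem 2}: its sides are $a$ and $b$ and its relevant angle is $\delta_1/2$, giving area $ab\sin(\delta_1/2)$. For the remaining five parallelograms I would repeat the same bookkeeping, tracing side labels through Figure~2 (using Observations~1 and 2 to transport the lengths $a,b,c,d$ to all parallel segments) and repeating the angle computation of Lemma~\ref{lem 2} at the appropriate vertex. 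The target expression predicts that the six values of $\mathrm{area}(P_i)$ are exactly $ab\sin(\delta_1/2)$, $cd\sin(\delta_1/2)$, $ac\sin(\delta_2/2)$, $bd\sin(\delta_2/2)$, $ad\sin(\delta_3/2)$, $bc\sin(\delta_3/2)$, one per parallelogram, and I would verify this matching one $P_i$ at a time.

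The only real step requiring care is the angle computation in the style already performed for $P_6$: at each remaining vertex one must write the interior angle as a sum of two expressions of the form $\alpha$ or $\beta$ (or their analogues at other extra points $O_i$), then collect the angles of the surrounding face-triangles into the sum of their angle-sums, so that a $4\pi-\theta_j$ simplifies to $\delta_j/2$ after subtracting $\pi$ and dividing by two. This is the main potential obstacle, because it is tempting to mis-identify which $\omega$'s appear; but each case is a direct analogue of the $P_6$ computation, and once verified, the corollary follows by simple addition.
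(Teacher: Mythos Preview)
Your proposal is correct and follows exactly the approach the paper intends: the corollary is stated without separate proof because it is meant to be immediate from Lemma~\ref{lem 2} together with the antipodal pairing of the twelve parallelograms, and you have spelled out precisely that computation. The only thing the paper adds is the remark that the side-length and angle data for all six parallelograms are recorded in the colored schematic at the start of Section~2.1, which short-circuits the case-by-case bookkeeping you describe.
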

 
Since the parallelogram-decomposition of a centrally symmetric octahedron only depends on its structure and vertex-labels, each octahedron can be associated to a quadruple $(a,b,c,d)$ of positive numbers in a unique way. Conversely, given any ordered quadruple of positive numbers, we can construct six pairs of parallelograms based on Lemma \ref{lem 2}, and glue them to construct a unique octahedron up to isometry that respects vertex-labels. This gives an identification of the space of centrally symmetric octahedra with labeled vertices and prescribed cone-deficits with the positive orthant in $\mathbb{R}^4$. In addition, we see that $\mathcal{C}(\delta_1,\delta_2,\delta_3)$ is the set of $(a,b,c,d)$ in this orthant such that
$$2\left[(ab+cd)\sin\frac{\delta_1}{2}+(ac+bd)\sin\frac{\delta_2}{2}+(ad+bc)\sin\frac{\delta_3}{2}\right]=1$$

\subsection{The Space $\mathcal{C}(\delta_1,\delta_2,\delta_3)$ Is an Ideal Tetrahedron}

In this section, we show that $\mathcal{C}(\delta_1,\delta_2,\delta_3)$ is locally isometric to $\mathbb{H}^3$, and is in fact an ideal tetrahedron by looking at its boundary. 

The surface area function in Corollary~\ref{cor 1} is a quadratic form of four variables. We compute its signature below. 

To make the computations neater, we introduce the following two notations:
$$S_i=\sin\frac{\delta_i}{2},\quad C_i=\cos\frac{\delta_i}{2}$$.

\begin{lemma}
The area function has signature $(1,3)$.
\end{lemma}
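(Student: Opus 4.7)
The plan is to diagonalize the symmetric matrix of the area form explicitly and then read off its signature from the signs of the eigenvalues; the constraint $\delta_1+\delta_2+\delta_3=2\pi$ will be precisely what pins those signs down.

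First I would rewrite the area function from Corollary~\ref{cor 1} as $\vec x^{\,T} M \vec x$ with $\vec x=(a,b,c,d)^T$ and
$$M = \begin{pmatrix} 0 & S_1 & S_2 & S_3 \\ S_1 & 0 & S_3 & S_2 \\ S_2 & S_3 & 0 & S_1 \\ S_3 & S_2 & S_1 & 0 \end{pmatrix}.$$
Next I would exploit the fact that $M$ commutes with the three sign-change involutions $(a,b,c,d)\mapsto(b,a,d,c),(c,d,a,b),(d,c,b,a)$, which together generate a copy of $(\mathbb{Z}/2)^2$. The common eigenbasis of that action consists of the Hadamard-type vectors
$$v_0=(1,1,1,1),\ v_1=(1,1,-1,-1),\ v_2=(1,-1,1,-1),\ v_3=(1,-1,-1,1),$$
and a short matrix-vector multiplication will confirm that each $v_k$ is in fact an eigenvector of $M$, with eigenvalues
\begin{align*}
\lambda_0 &= S_1+S_2+S_3, \\
\lambda_1 &= S_1-S_2-S_3, \\
\lambda_2 &= -S_1+S_2-S_3, \\
\lambda_3 &= -S_1-S_2+S_3
\end{align*}
respectively. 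Since the $v_k$ are pairwise orthogonal and nonzero, they form a basis of $\mathbb{R}^4$, so the signature of the form is just the signed count of the $\lambda_k$.

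It remains to determine those signs, and this is where the main (and still brief) obstacle sits. Clearly $\lambda_0>0$. For the other three, the decisive observation is that $\delta_1/2,\delta_2/2,\delta_3/2$ are strictly positive real numbers summing to $\pi$, so they are the interior angles of a non-degenerate Euclidean triangle. By the law of sines the sides of that triangle are proportional to $S_1,S_2,S_3$, and the ordinary triangle inequality then gives $S_i<S_j+S_k$ for each permutation; this forces $\lambda_1,\lambda_2,\lambda_3<0$, and the signature is $(1,3)$. The only genuinely non-routine move is spotting the $(\mathbb{Z}/2)^2$-symmetry that diagonalizes $M$ uniformly in the $S_i$; once that is in place, the translation of the Gauss--Bonnet constraint $\sum\delta_i=2\pi$ into the triangle inequality via the law of sines is immediate.
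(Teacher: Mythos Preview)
Your proof is correct and follows the same overall strategy as the paper: both arguments diagonalize the symmetric matrix $M$ explicitly, obtain the same four eigenvalues $S_1+S_2+S_3$ and $\pm S_1\pm S_2\pm S_3$ (with an odd number of minus signs), and then use the constraint $\tfrac{\delta_1}{2}+\tfrac{\delta_2}{2}+\tfrac{\delta_3}{2}=\pi$ to pin down the signs. The differences are purely in technique. The paper computes and factors the quartic characteristic polynomial, whereas you bypass that by noticing the $(\mathbb{Z}/2)^2$-symmetry of the form and writing down the Hadamard eigenbasis directly; this is cleaner and explains \emph{why} the polynomial factors so nicely. For the signs, the paper expands $S_k=\sin\bigl(\tfrac{\delta_i+\delta_j}{2}\bigr)=S_iC_j+S_jC_i$ and uses $C_i,C_j<1$, while you observe that $\tfrac{\delta_1}{2},\tfrac{\delta_2}{2},\tfrac{\delta_3}{2}$ are the angles of a Euclidean triangle and invoke the law of sines together with the triangle inequality; these are two packagings of the same trigonometric fact, with yours being the more geometric.
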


\begin{proof}
The symmetric matrix associated to this function is
\[
\begin{bmatrix}
    0                       & S_1 & S_2 & S_3 \\
    S_1  & 0                      & S_3 & S_2 \\
    S_2  & S_3 & 0                      & S_1 \\
    S_3  & S_2 & S_1 & 0
\end{bmatrix}
\]
and the characteristic polynomial is given by 
\[
\begin{split}
&x^4-2({S_1}^2+{S_2}^2+{S_3}^2)x^2-8{S_1}{S_2}{S_3}x+{S_1}^4+{S_2}^4+{S_3}^4-2{S_1}^2{S_2}^2-2{S_1}^2{S_3}^2 -2{S_2}^2{S_3}^2
 \end{split}
\]

We can then factor this polynomial to get
$$(x-{S_1}-S_2-S_3)(x +S_1+S_2-S_3)(x+S_1-S_2+S_3)(x-S_1+S_2+S_3)$$

Hence, the roots are

$x_1=S_1+S_2+S_3$,  $x_2=S_3-S_2-S_1$, $x_3=S_2-S_1-S_3$ and $x_4=S_1-S_2-S_3$.

Since $\delta_1,\delta_2,\delta_3>0$ and $\frac{\delta_1}{2}+\frac{\delta_2}{2}+\frac{\delta_3}{2}=\pi$, we have
\[
\begin{split}
x_2&=\sin\frac{\delta_1+\delta_2}{2}-S_1-S_2\\
&=S_1C_2+S_2C_1-S_1-S_2\\
&=S_1(C_2-1)+S_2(C_1-1)\\
&<0
\end{split}
\]

By symmetry, we have $x_3,x_4<0$. Clearly, $x_1>0$. So the form has signature  $(1,3)$. \end{proof}

The space of vectors of length $1$ in a quadratic form of signature $(1, 3)$ is isometric to the real hyperbolic space $\mathbb{H}^3$. In addition, $\mathcal{C}(\delta_1,\delta_2,\delta_3)$ is bounded by four geodesic hyperplanes. For instance, the set of points with the last coordinate vanish $(d=0)$ is the fixed point set of the isometry $(a,b,c,d)\mapsto(a+2dC_3,b+2dC_2,c+2dC_1,-d)$. Finally, every three hyperplanes intersect at an ideal point with three of the coordinates vanish. Therefore, $\mathcal{C}(\delta_1,\delta_2,\delta_3)$ is the interior of a real hyperbolic ideal tetrahedron. The points on the boundary are those octahedra with one or two coordinates vanish and degenerate to pillowcases of centrally symmetric hexagons or parallelograms.

We may also consider the ``unlabeled space'', in which two octahedra are equivalent if there is an isometry between them that respect cone-deficit values, not necessarily the vertex-labels. This space is just $\mathcal{C}(\delta_1,\delta_2,\delta_3)$ when all cone-deficit values are distinct. Otherwise, $\mathcal{C}(\delta_1,\delta_2,\delta_3)$ has a nontrivial symmetry group, the elements in which map every octahedron to another isometric one. For example, if $\delta_2=\delta_3$, then the symmetry group has two generators. We can obtain one of them by interchanging $a$ and $b$, and the other one by interchanging $c$ and $d$. So the unlabeled space is the quotient of $\mathcal{C}(\delta_1,\delta_2,\delta_3)$ by the dihedral group $D_2$. Finally, if $\delta_1=\delta_2=\delta_3$, the symmetry group is the permutation group on $\{a,b,c,d\}$, so the unlabeled space is the quotient of $\mathcal{C}(\delta_1,\delta_2,\delta_3)$ by $S_4$.

\subsection{Dihedral angles of $\mathcal{C}(\delta_1,\delta_2,\delta_3)$}

We have shown that $\mathcal{C}(\delta_1,\delta_2,\delta_3)$ is an ideal tetrahedron, bounded by four geodesic planes. In this section, we will compute its dihedral angles by finding a normal vector to each plane. 

The area function is the diagonal part of an inner product on $\mathbb{R}^4\times\mathbb{R}^4$, given by 
\[
\begin{split}
&(a,b,c,d)\ast(a',b',c',d')\\
=&(ab'+a'b+cd'+c'd)S_1+(ac'+a'c+bd'+b'd)S_2+(ad'+a'd+bc'+b'c)S_3
\end{split}
\]

From this formula, a normal vector to the plane $a=0$ is a vector $(n_a,n_b,n_c,n_d)$ satisfying
$$b(n_aS_1+n_cS_3+n_dS_2)+c(n_aS_2+n_bS_3+n_dS_1)+d(n_aS_3+n_bS_2+n_cS_1)=0$$
for all $b,c,d$.

Given the trigonometric formula $$S_i=S_jC_k+S_kC_j$$
for mutually distinct $i,j,k\in\{1,2,3\}$, we observe that in order to make the expression in each bracket vanish, we can take
$$(n_a,n_b,n_c,n_d)=(1,-C_1,-C_2,-C_3)$$

Similarly, we can find a normal vector $(m_a,m_b,m_c,m_d)=(-C_1,1,-C_3,-C_2)$ to the plane $b=0$.

A straightforward calculation shows that both vectors have the same length, and their inner product divided by their length square is $-C_1$. Therefore, the dihedral angle between the planes $a=0$ and $b=0$ is $\frac{\delta_1}{2}$. Some details of this calculation are given in the Appendix.

We get the rest of dihedral angles by symmetry in the area function $(ab+cd)S_1+(ac+bd)S_2+(ad+bc)S_3$. For instance, by interchanging $b$ and $c$, $\delta_1$ and $\delta_2$, we find the dihedral angle between the planes $a=0$ and $c=0$ is $\frac{\delta_2}{2}$. This proves the main theorem of this work.

\begin{corollary}
The volume of $\mathcal{C}(\delta_1,\delta_2,\delta_3)$ is $L(\frac{\delta_1}{2})+L(\frac{\delta_2}{2})+L(\frac{\delta_3}{2})$, where $L(x)=-\int_0^x log(2\sin\theta)d\theta$ is the Lobachevsky function.
\end{corollary}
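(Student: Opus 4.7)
The plan is to apply Milnor's classical formula for the volume of an ideal hyperbolic tetrahedron: if the three pairs of opposite edges carry dihedral angles $\alpha,\beta,\gamma$ with $\alpha+\beta+\gamma=\pi$, then the volume equals $L(\alpha)+L(\beta)+L(\gamma)$. Given the main theorem of the paper, this corollary is a direct substitution, provided I first confirm that $\mathcal{C}(\delta_1,\delta_2,\delta_3)$ really has the standard edge-and-angle structure of an ideal tetrahedron in $\mathbb{H}^3$ with dihedral-angle triple $(\delta_1/2,\delta_2/2,\delta_3/2)$.

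First I would enumerate the six edges of $\mathcal{C}(\delta_1,\delta_2,\delta_3)$ as the pairwise intersections of the four bounding hyperplanes $\{a=0\},\{b=0\},\{c=0\},\{d=0\}$. Section~2.4 directly computes three of the dihedral angles: the pairs $(\{a=0\},\{b=0\})$, $(\{a=0\},\{c=0\})$, $(\{a=0\},\{d=0\})$ meet at angles $\delta_1/2,\delta_2/2,\delta_3/2$ respectively. To obtain the three remaining dihedral angles, I would use the involution $(a,b,c,d)\mapsto(c,d,a,b)$, which visibly preserves the area quadratic form $(ab+cd)S_1+(ac+bd)S_2+(ad+bc)S_3$ and swaps the hyperplane pairs $\{a=0\}\leftrightarrow\{c=0\}$, $\{b=0\}\leftrightarrow\{d=0\}$. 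It therefore sends the edges $\{a=b=0\}\leftrightarrow\{c=d=0\}$, $\{a=c=0\}\leftrightarrow\{b=d=0\}$, and $\{a=d=0\}\leftrightarrow\{b=c=0\}$, proving that opposite edges share the same dihedral angle. Thus each value $\delta_i/2$ appears on exactly one pair of opposite edges, which is precisely the edge structure demanded by Milnor's formula.

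At each ideal vertex—where three of the four coordinates vanish simultaneously—the three incident edges then carry the three distinct values $\delta_1/2,\delta_2/2,\delta_3/2$, whose sum is $\pi$ by the Gauss-Bonnet identity $\delta_1+\delta_2+\delta_3=2\pi$. This matches the standard ideal-tetrahedron constraint that dihedral angles at an ideal vertex sum to $\pi$. Applying Milnor's formula with $\alpha=\delta_1/2$, $\beta=\delta_2/2$, $\gamma=\delta_3/2$ then yields the volume $L(\delta_1/2)+L(\delta_2/2)+L(\delta_3/2)$. The only nontrivial ingredient is the cited volume formula itself; no real obstacle remains, since all of the geometric content is already supplied by the main theorem and the symmetry of the area form.
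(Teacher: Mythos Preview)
Your approach is exactly what the paper intends: the corollary is stated without proof immediately after the main theorem, so it is meant to follow at once from the standard Lobachevsky--Milnor volume formula for an ideal tetrahedron with dihedral angles $\alpha,\beta,\gamma$ summing to $\pi$.

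One small slip: the involution $(a,b,c,d)\mapsto(c,d,a,b)$ does \emph{not} send the edge $\{a=c=0\}$ to $\{b=d=0\}$; it fixes each of these two edges setwise (since it swaps the hyperplanes $\{a=0\}\leftrightarrow\{c=0\}$ and $\{b=0\}\leftrightarrow\{d=0\}$). So this single involution only establishes equality of dihedral angles for two of the three opposite-edge pairs. To cover the remaining pair you can either invoke a second form-preserving involution such as $(a,b,c,d)\mapsto(b,a,d,c)$, which swaps $\{a=0\}\leftrightarrow\{b=0\}$ and $\{c=0\}\leftrightarrow\{d=0\}$ and hence $\{a=c=0\}\leftrightarrow\{b=d=0\}$, or simply note that once the main theorem has identified $\mathcal{C}(\delta_1,\delta_2,\delta_3)$ with an ideal tetrahedron, the equality of opposite dihedral angles is automatic from the vertex-angle-sum condition. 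Either way the argument goes through and the conclusion stands.
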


\section*{Appendix: Calculating the Angle Between Two Vectors}

Here we give more details in computing the angle between the vectors $(1,-C_1,-C_2,-C_3)$ and $(-C_1,1,-C_3,-C_2)$ for verification. The two vectors have the same length by symmetry in the expression of the quadratic form.

Their inner product is
\[
\begin{split}
&(-C_1,1,-C_3,-C_2)\ast(1,-C_1,-C_2,-C_3)\\
=&S_1({C_1}^2+1+{C_3}^2+{C_2}^2)+2S_2(C_1C_2-C_3)+2S_3(C_1C_3-C_2)\\
=&S_1({C_1}^2+1)+(S_1{C_3}^2+S_3C_1C_3)+(S_1{C_2}^2+S_2C_1C_2)\\
&-2(S_3C_2+S_2C_3)+(S_2C_1C_2+S_3C_1C_3)\\
=&S_1({C_1}^2+1)+S_2C_3+S_3C_2-2(S_3C_2+S_2C_3)+(S_2C_2+S_3C_3)C_1\\
=&S_1({C_1}^2+1)-S_1+(S_2C_2+S_3C_3)C_1\\
=&(S_1C_1+S_2C_2+S_3C_3)C_1
\end{split}
\]

On the other hand, the length square of $(1,-C_1,-C_2,-C_3)$ is 
\[
\begin{split}
&(1,-C_1,-C_2,-C_3)\ast(1,-C_1,-C_2,-C_3)\\
=&2S_1(C_2C_3-C_1)+2S_2(C_1C_3-C_2)+2S_3(C_1C_2-C_3)\\
=&(S_1C_2C_3+S_3C_1C_2)+(S_1C_2C_3+S_2C_1C_3)(S_3C_1C_2+S_2C_1C_3)\\
&-2S_1C_1-2S_2C_2-2S_3C_3\\
=&S_2C_2+S_3C_3+S_1C_1-2S_1C_1-2S_2C_2-2S_3C_3\\
=&-(S_1C_1+S_2C_2+S_3C_3)
\end{split}
\]

The quotient of these two expressions is $-\cos\frac{\delta_1}{2}$ as expected.

\end{document}